\newtheorem{thm}{Theorem}
\begin{document}

\title{ {Modelling with given reliability and accuracy in the space ${{L}_{p}}(T)$ of stochastic processes from $Sub_\varphi(\Omega)$ decomposable in series with independent elements}}

\date{}

\maketitle
Applied Statistics. Actuarial and Financial Mathematics.  No. 2, 13--23,  2012

\vspace{20pt}

\author{\textbf{Oleksandr Mokliachuk}$^{1,*}$\\\\
\footnotesize $^{1}${National Technical University of Ukraine ``Igor Sikorsky Kyiv Polytechnic Institute'', Kyiv, Ukraine}\\
\footnotesize $^{*}$Corresponding email: omoklyachuk@gmail.com}\\\\\\


\noindent \textbf{\large{Abstract}} \hspace{2pt}
Models that approximate stochastic processes from $Sub_\varphi(\Omega)$ with given reliability and accuracy in ${{L}_{p}}(T)$ for some given $\varphi(t)$ are considered.
We also study construction of models of processes which can be decomposed into series with approximate elements.
Karhunen-Lo{\`e}ve model is considered as an example of the application of the proposed construction.
\\

\noindent\textbf{{Keywords}} \hspace{2pt}
 Models of stochastic processes, Karhunen-Lo{\`e}ve model, sub-Gaussian processes.
Models of stochastic processes,
\\

\noindent\textbf{ AMS 2010 subject classifications.} Primary: 60G07, Secondary: 62M15, 46E30

\noindent\hrulefill

\section{{Introduction}}

Let $\left(\Omega,{\cal F}, P \right)$ be a standard probability space, let $L^{o}_2(\Omega)$  be the space of centered random variables with finite second moment,
$E\xi=0$, $E\xi^2<\infty$, and let $\{\Lambda, {\cal U}, \mu\}$ be a measurable space with a $\sigma$-finite measure $\mu$.
Let $L_p(\Lambda,\mu)$ be a Banach space of integrable to the power $p$  functions with the measure $\mu $.

Definition 1. \cite{Buldygin}
A random variable $\xi$ is called sub-Gaussian if there exist $a\geq 0$, such
that for all $\lambda\in R$ the following inequality holds true:
$$E\exp\{\lambda\xi\}\leq\exp\left\{\frac{a^2\lambda^2}{2}\right\}.$$
The characteristic of the random variable $\xi$, specified as
$$\tau(\xi)=\inf\left\{a\geq0: E \exp\{\lambda\xi\}\leq \exp\left\{\frac{a^2\lambda^2}{2}\right\},\lambda\in  R\right\}$$
will be called a sub-Gaussian standard of the random variable $\xi$.

Definition 2. \cite{Buldygin} A continuous even convex function $\varphi=\{\varphi(x),x\in R\}$ is called $N$--Orlicz function,
if it increases in the domain $x>0$, $\varphi(0)=0$, $\varphi(x)>0$ for $x\neq0$ and the following conditions hold true:
$$\lim_{x\to 0} \frac{\varphi(x)}{x}=0, \quad \text{and}\quad
\lim_{x\to \infty} \frac{\varphi(x)}{x}=\infty.$$

Definition 3. \cite{Buldygin} Let $\varphi=\{\varphi(x),x\in R\}$ be an $N$--Orlicz function. The function
$$\varphi^*(x)=\sup_{y\in R}(xy-\varphi(y))$$ is called the Young-Fenchel transform of the function $\varphi$.

Definition 4. \cite{Buldygin} Let $\varphi$ be an  Orlicz function such that
$$\lim_{x\to 0} \inf_x \frac{\varphi(x)}{x^2}=c>0.$$
A random variable $\xi$ belongs to the space $Sub_\varphi(\Omega)$, if $E\xi=0$, $E\exp\{\lambda\xi\}$ exists for all $\lambda\in R$,
and there exists a number $a>0$ such that for all $\lambda\in R$ the inequality $$E\exp\{\lambda\xi\}\leq \exp\{\varphi(\lambda a)\}$$ is true.

Definition 5. \cite{kozach} Let $X=\{X(t),t\in [0,T]\}$ be a stochastic process from the space $Sub_\varphi(\Omega)$.
A stochastic process $X_N=\{X_N(t),t\in [0,T]\}$ from the space $Sub_\varphi(\Omega)$ we will call a model that
approximates a stochastic process $X$  with given reliability $1-\alpha$ and accuracy $\delta$ in the space $L_p(0,T)$, if
$$P\left\{\int_0^T( X(t)-X_N(t))^pdt)^{1/p}>\delta\right\}\leq\alpha.$$

The following theorem is proved in \cite{koz-kam}.

\begin{thm}\label{thm1}
 \cite{koz-kam} Let $\{\mathbb T,\Lambda, M\}$ be a measurable space, let $X=\{X(t),t\in\mathbb T\} \subset S u b_\varphi (\Omega)$, and let $\tau_\varphi(t)=\tau_\varphi(X(t))$.
Let the integral
$$\int_{\mathbb T} (\tau_\varphi (t))^p d\mu (t)=c<\infty$$   exist.
Then the integral $$\int_{\mathbb T} |X(t)|^p d\mu(t)<\infty$$ exists with probability 1 and
$$P\left\{\int_{\mathbb T} |X(t)|^p d\mu(t) >\delta\right\} \leq 2\exp\left\{-\varphi^* \left(\left(\frac{\delta}{c}\right)^{1/p}\right)\right\}$$
for all
\[\delta >c{{\left( f\left( \frac{{{c}^{{}^{1}/{}_{p}}}p}{{{\delta }^{{}^{1}/{}_{p}}}} \right) \right)}^{p}},\]
 where  $f$ is a function such that  $\varphi(u)=\int_0^u f(v)dv$  $\forall
u>0$ and
where $\varphi^*$ is the Young-Fenchel transform of $\varphi$.
\end{thm}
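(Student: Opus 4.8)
The plan is to reduce the statement to a moment estimate for a single sub-$\varphi$-Gaussian random variable and then lift that estimate to the integral functional $\eta:=\int_{\mathbb T}|X(t)|^p\,d\mu(t)$ by the generalized Minkowski inequality, finishing with Markov's inequality and one well-chosen pair of parameters. First I would record the one-point moment bound. Fix $t$ and set $\tau=\tau_\varphi(t)$. Since $\varphi$ is even, Definition 4 gives $E\exp\{\pm\lambda X(t)\}\le\exp\{\varphi(\lambda\tau)\}$, hence $E\exp\{\lambda|X(t)|\}\le 2\exp\{\varphi(\lambda\tau)\}$ for $\lambda>0$. Combining this with the elementary inequality $x^r\le\left(\tfrac{r}{e\lambda}\right)^r e^{\lambda x}$ (which is just $\sup_{x\ge0}x^re^{-\lambda x}=(r/(e\lambda))^r$) and substituting $v=\lambda\tau$ yields
$$E|X(t)|^r\le 2\,\tau^r\left(\frac{r}{ev}\right)^r e^{\varphi(v)}\qquad\text{for all }v>0,\ r>0.$$

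Taking $r=p$ and any fixed $v$ here gives $E|X(t)|^p\le K_p(\tau_\varphi(t))^p$ with a constant $K_p$ not depending on $t$; by Tonelli's theorem $E\,\eta=\int_{\mathbb T}E|X(t)|^p\,d\mu(t)\le K_p\,c<\infty$, so $\eta<\infty$ with probability $1$. For the tail bound I would next lift the moment estimate: for $r\ge p$ the generalized Minkowski (integral) inequality gives $(E\,\eta^{r/p})^{p/r}\le\int_{\mathbb T}(E|X(t)|^r)^{p/r}\,d\mu(t)$, and bounding the integrand by the displayed estimate and using $\int_{\mathbb T}(\tau_\varphi(t))^p\,d\mu(t)=c$ produces
$$E\,\eta^{r/p}\le 2\left(\frac{r}{ev}\right)^r e^{\varphi(v)}\,c^{r/p}\qquad\text{for all }v>0,\ r\ge p.$$

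Now Markov's inequality gives, for every $v>0$ and $r\ge p$,
$$P\{\eta>\delta\}\le\delta^{-r/p}E\,\eta^{r/p}\le 2\left(\frac{r\,c^{1/p}}{e\,v\,\delta^{1/p}}\right)^r e^{\varphi(v)}.$$
Writing $s:=(\delta/c)^{1/p}$, I would then choose $v=v_0$ with $f(v_0)=s$ (available since $f$ is nondecreasing with range $[0,\infty)$ by Definitions 2 and 4) and $r=r_0:=s\,v_0$. With these choices $\frac{r_0 c^{1/p}}{e\,v_0\,\delta^{1/p}}=e^{-1}$, so the bracket contributes $e^{-r_0}=e^{-s v_0}$, and since the supremum defining $\varphi^*(s)$ is attained at $v_0$ (there $s=f(v_0)=\varphi'(v_0)$) we have $-s v_0+\varphi(v_0)=-\varphi^*(s)$; the whole right-hand side collapses to $2\exp\{-\varphi^*((\delta/c)^{1/p})\}$. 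Conceptually this $(v_0,r_0)$ is nothing but the joint minimizer of the last display, optimality at $w:=r/(sv)=1$ being the elementary inequality $w\log w-w+1\ge0$.

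It remains to identify the admissible range of $\delta$: the application of Minkowski requires $r_0=s v_0\ge p$, i.e. $v_0\ge p/s$, which since $f$ is nondecreasing with $f(v_0)=s$ is equivalent to $s\ge f(p/s)$, that is $\delta\ge c\big(f(c^{1/p}p/\delta^{1/p})\big)^p$, the restriction stated in the theorem. The routine ingredients are the one-point moment bound and the Minkowski/Markov manipulations; the main obstacle — and the real content of the argument — is the step in the previous paragraph: one must keep \emph{both} free parameters (the exponent $v$ from the Chernoff-type bound and the moment order $r$), and recognise that the particular choice $f(v_0)=(\delta/c)^{1/p}$, $r_0=v_0(\delta/c)^{1/p}$ makes the Young--Fenchel transform appear exactly, with no loss beyond the harmless factor $2$, while simultaneously checking that this optimal $r_0$ stays $\ge p$ on precisely the stated range of $\delta$ (which uses monotonicity of $f$ and the growth conditions on $\varphi$).
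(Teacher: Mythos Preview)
The paper does not prove this theorem at all; it is quoted verbatim from \cite{koz-kam} and then used as a black box (see the proof of Theorem~\ref{thm2}, which is the first place any argument appears). So there is nothing in the present paper to compare your proposal against.

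That said, your argument is correct and is essentially the standard route to results of this type: a pointwise Chernoff-type moment bound for a single $Sub_\varphi$ variable, lifted to the $L_p$-functional by the integral Minkowski inequality, followed by Markov's inequality and a two-parameter optimisation that makes the Young--Fenchel transform appear. Your identification of the constraint $r_0\ge p$ with the stated range $\delta>c\big(f(c^{1/p}p/\delta^{1/p})\big)^p$ is exactly right. One small technical point: you assert the existence of $v_0$ with $f(v_0)=s$, appealing to $f$ being nondecreasing with range $[0,\infty)$. The conditions $\varphi(x)/x\to 0$ and $\varphi(x)/x\to\infty$ do force $f(0+)=0$ and $f(v)\to\infty$, but $f$ (as the a.e.\ derivative of a convex function) need not be continuous, so it may skip values. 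In that case one should take $v_0=\inf\{v:f(v)\ge s\}$ and observe that the Legendre relation $sv_0-\varphi(v_0)=\varphi^*(s)$ still holds at this generalized inverse; the rest of your computation goes through unchanged. This is a routine fix and does not affect the structure of the proof.
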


The following theorem is a direct corollary of the previous one.

\begin{thm} \label{thm2}
Let $\{{\mathbb T},\Lambda, M\}$ be a measurable space, let $X=\{X(t),t\in {\mathbb T}\} \subset S u b_\varphi (\Omega)$, and let $X_N$ be a model of the process $X$.
Let $f$  be a function such that $\varphi(u)=\int_0^u f(v)dv$ $\forall u>0$. Let
$$c_N=\int_{\mathbb T} (\tau_\varphi(X(t)-X_N(t)))^pd\mu(t)<\infty.$$
The model $X_N(t)$ approximates the process $X(t)$ with reliability $1-\alpha$ and accuracy$\delta$ in the $L_p(T)$ space, if
\begin{equation}\label{eq1}
c_N\leq \frac{\delta}{\left(\varphi^{*(-1)}\left(\ln \frac{2}{\alpha}\right)\right)^p}
\end{equation}
and
\begin{equation}\label{eq2}
\delta> c_N\left(f\left( \frac {c_N^{1/p}p}{\delta^{1/p}}\right)\right)^p,
\end{equation}
where $\varphi^*$ is a Young-Fenchel transform of the function $\varphi$.
\end{thm}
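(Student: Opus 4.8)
\medskip
\noindent\emph{Proof idea.} The plan is to deduce Theorem~\ref{thm2} from Theorem~\ref{thm1} by applying the latter to the difference process. Set $Y=\{Y(t),\,t\in\mathbb T\}$ with $Y(t)=X(t)-X_N(t)$. Since $Sub_\varphi(\Omega)$ is a linear space, $Y(t)\in Sub_\varphi(\Omega)$ for each $t\in\mathbb T$, and the quantity playing the role of the constant $c$ of Theorem~\ref{thm1} for $Y$ is
$$\int_{\mathbb T}\left(\tau_\varphi(Y(t))\right)^p\,d\mu(t)=\int_{\mathbb T}\left(\tau_\varphi(X(t)-X_N(t))\right)^p\,d\mu(t)=c_N,$$
which is finite by hypothesis. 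Moreover, condition \eqref{eq2} is exactly the admissibility restriction $\delta>c\left(f\left(c^{1/p}p/\delta^{1/p}\right)\right)^p$ of Theorem~\ref{thm1} written for $Y$ (with $c$ replaced by $c_N$), so under \eqref{eq2} Theorem~\ref{thm1} is applicable to $Y$ at the threshold $\delta$.

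First I would record the tail bound that Theorem~\ref{thm1} then provides: under \eqref{eq2},
$$P\left\{\int_{\mathbb T}|X(t)-X_N(t)|^p\,d\mu(t)>\delta\right\}\leq 2\exp\left\{-\varphi^*\left(\left(\frac{\delta}{c_N}\right)^{1/p}\right)\right\}.$$
In view of Definition~5, to finish the proof it suffices to make the right-hand side at most $\alpha$. Passing to logarithms, $2\exp\{-\varphi^*((\delta/c_N)^{1/p})\}\leq\alpha$ is equivalent to $\varphi^*\left((\delta/c_N)^{1/p}\right)\geq\ln(2/\alpha)$. Using that $\varphi^*$, being the Young--Fenchel transform of an $N$--function, is nonnegative, nondecreasing and continuous on $[0,\infty)$, so that it admits an inverse $\varphi^{*(-1)}$ on the relevant range, this is in turn equivalent to $(\delta/c_N)^{1/p}\geq\varphi^{*(-1)}(\ln(2/\alpha))$, i.e. to
$$c_N\leq\frac{\delta}{\left(\varphi^{*(-1)}\left(\ln\frac{2}{\alpha}\right)\right)^p},$$
which is precisely \eqref{eq1}. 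Hence \eqref{eq1} together with \eqref{eq2} gives the probability bound of Definition~5, so $X_N$ is a model of $X$ with reliability $1-\alpha$ and accuracy $\delta$ in $L_p(T)$; the almost sure finiteness of $\int_{\mathbb T}|X(t)-X_N(t)|^p\,d\mu(t)$ comes for free from Theorem~\ref{thm1}.

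The only point I expect to require genuine care is the inversion of $\varphi^*$: one must verify that $\varphi^*$ is strictly increasing (or at least that a suitable generalised inverse serves), and that $\ln(2/\alpha)$ --- positive because $\alpha\in(0,1)$ --- lies in the range of $\varphi^*$, so that $\varphi^{*(-1)}(\ln(2/\alpha))$ is well defined and positive; this is where the growth properties of the $N$--function $\varphi$ (hence of $\varphi^*$) enter. Everything else reduces to the substitution of $c_N$ for $c$ in Theorem~\ref{thm1}, so I anticipate no further difficulty.
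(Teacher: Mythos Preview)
Your proposal is correct and follows essentially the same route as the paper: apply Theorem~\ref{thm1} to the difference process $X-X_N$, read off the tail bound with $c$ replaced by $c_N$, and then invert the inequality $2\exp\{-\varphi^*((\delta/c_N)^{1/p})\}\leq\alpha$ to obtain \eqref{eq1}. Your discussion of the invertibility of $\varphi^*$ is in fact more careful than the paper's own proof, which simply writes $\varphi^{*(-1)}$ without comment.
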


\begin{proof}
Since $|X(t)-X_N(t)|\in Sub_\varphi(\Omega)$, Theorem \ref{thm1} implies
$$P\left\{\int_{\mathbb T} |X(t)|^p d\mu(t) >\delta\right\} \leq 2\exp\left\{-\varphi^* \left(\left(\frac{\delta}{c}\right)^{1/p}\right)\right\},$$
therefore,
$$P\left\{\int_{\mathbb T} |X(t)-X_N(t)|^p d\mu(t) >\delta\right\} \leq 2\exp\left\{-\varphi^* \left(\left(\frac{\delta}{c_N}\right)^{1/p}\right)\right\},$$
so inequality \eqref{eq2} has to be true and
$$2\exp\left\{-\varphi^* \left(\left(\frac{\delta}{c_N}\right)^{1/p}\right)\right\}\leq \alpha,$$
whence
$$\varphi^* \left(\left(\frac{\delta}{c_N}\right)^{1/p}\right)\geq \ln \frac{2}{\alpha},$$
and, finally,
$$c_N^{1/p}\leq \frac{\delta^{1/p}}{\varphi^{*(-1)}(\ln \frac{2}{\alpha})}.$$
\end{proof}

\section{Estimation of reliability and accuracy of modelling of stochastic process in $L_p(T)$  spaces}

Based on the Theorem \ref{thm2}, for functions $\varphi(t)$ of specified forms the following theorems can be formulated.

\begin{thm} \label{thm3}
  Let a stochastic process $X=\{X(t),t\in [0,T]\}$ belong to the space $S u b_\varphi (\Omega)$, let
$$\varphi(t)=\frac{t^\gamma}{\gamma}$$
for $1<\gamma\leq 2$.
 Let
$$c_N=\int_0^T (\tau_\varphi(X(t)-X_N(t)))^pd\mu(t)<\infty.$$
 A model $X_N(t)$ approximates the process $X(t)$ with reliability $1-\alpha$ and accuracy $\delta$ in the $L_p(T)$ space, if
$$\left\{\begin{array}{c}c_N\leq \delta/(\beta \ln \frac{2}{\alpha})^{p/\beta}\\ c_N<\delta /p^{p\left(1-1/\gamma\right)}\end{array},\right.$$
where $\beta$ is a number such that $$\frac{1}{\beta}+\frac{1}{\gamma}=1.$$
\end{thm}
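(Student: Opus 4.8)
The plan is to apply Theorem~\ref{thm2} directly, so the real work reduces to computing, for the particular choice $\varphi(t)=t^\gamma/\gamma$, the three objects that enter its hypotheses: the density $f$ of $\varphi$, the Young--Fenchel transform $\varphi^*$, and its inverse $\varphi^{*(-1)}$. Since $\varphi(u)=\int_0^u v^{\gamma-1}\,dv=u^\gamma/\gamma$, I would take $f(v)=v^{\gamma-1}$, which is admissible because on $v>0$ this function is continuous and nonnegative and integrates to $\varphi$.

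Next I would compute $\varphi^*$. For $x>0$ the supremum $\sup_{y}\bigl(xy-y^\gamma/\gamma\bigr)$ is attained where $y^{\gamma-1}=x$, i.e.\ $y=x^{1/(\gamma-1)}$, which gives $\varphi^*(x)=(1-1/\gamma)\,x^{\gamma/(\gamma-1)}=x^\beta/\beta$, using $1/\beta+1/\gamma=1$, so that $\beta=\gamma/(\gamma-1)$ and the exponent $\gamma/(\gamma-1)$ equals $\beta$. Inverting $x^\beta/\beta=u$ yields $\varphi^{*(-1)}(u)=(\beta u)^{1/\beta}$, hence $\varphi^{*(-1)}\!\left(\ln\frac{2}{\alpha}\right)=\left(\beta\ln\frac{2}{\alpha}\right)^{1/\beta}$ and $\bigl(\varphi^{*(-1)}(\ln\frac{2}{\alpha})\bigr)^{p}=\left(\beta\ln\frac{2}{\alpha}\right)^{p/\beta}$. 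Substituting this into \eqref{eq1} produces precisely the first inequality of the system, $c_N\le\delta/(\beta\ln\frac{2}{\alpha})^{p/\beta}$.

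It then remains to rewrite \eqref{eq2}. With $f(v)=v^{\gamma-1}$ one has $\bigl(f(c_N^{1/p}p/\delta^{1/p})\bigr)^{p}=c_N^{\gamma-1}\,p^{p(\gamma-1)}\,\delta^{-(\gamma-1)}$, so \eqref{eq2} becomes $\delta> c_N^{\gamma}\,p^{p(\gamma-1)}\,\delta^{1-\gamma}$; multiplying both sides by $\delta^{\gamma-1}>0$ and extracting $\gamma$-th roots gives $\delta> c_N\,p^{p(1-1/\gamma)}$, i.e.\ the second inequality $c_N<\delta/p^{p(1-1/\gamma)}$. Thus both hypotheses of Theorem~\ref{thm2} are in force and its conclusion applies verbatim. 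The only point requiring a little care is this last manipulation of \eqref{eq2}: one must record that $\delta$, $c_N$, $p$ and $\gamma$ are all positive, so that raising to fractional powers and multiplying through by $\delta^{\gamma-1}$ preserve the direction of the inequality. Beyond that bookkeeping there is no genuine obstacle, the statement being a direct computational specialization of Theorem~\ref{thm2} to $\varphi(t)=t^\gamma/\gamma$.
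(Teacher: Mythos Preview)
Your proposal is correct and follows essentially the same route as the paper: both apply Theorem~\ref{thm2} directly, identify $f(v)=v^{\gamma-1}$ and $\varphi^*(t)=t^\beta/\beta$, and then simplify conditions~\eqref{eq1} and~\eqref{eq2} by straightforward algebra. Your write-up is in fact more explicit than the paper's, which simply asserts $\varphi^*(t)=t^\beta/\beta$ without deriving it and does not spell out the inversion $\varphi^{*(-1)}(u)=(\beta u)^{1/\beta}$.
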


\begin{proof}
The first inequality follows from Theorem \ref{thm2} right away.
Indeed, as $\varphi(t)=t^\gamma/\gamma$, then $\varphi^*(t)=t^\beta/\beta$, which gives us the result required.

Since $\alpha\in(0,1)$, then $\ln\frac{2}{\alpha}>0.$

Let us consider the second inequality. Again, since $\varphi(t)=\frac{t^\gamma}{\gamma}$, then $f(t)=t^{\gamma-1}$, $t>0$, and
$$\delta>c_N\left(\left(\frac{c_N^{1/p}p}{\delta^{1/p}}\right)^{\gamma-1}\right)^p = \frac{c_N^\gamma p^{p(\gamma-1)}}{\delta^{\gamma-1}},$$
whence
$$c_N^\gamma<\frac{\delta^\gamma}{p^{p(\gamma-1)}}.$$
\end{proof}

\begin{thm} \label{thm4}
 Let a stochastic process $X=\{X(t),t\in [0,T]\}$ belong to $S u b_\varphi (\Omega)$, let
$$\varphi(t)=\left\{\begin{array}{c}\frac{t^2}{\gamma}, t<1\\ \frac{t^\gamma}{\gamma},t\geq1\end{array}\right.,$$
where $\gamma>2$.
Let
$$c_N=\int_0^T (\tau_\varphi(X(t)-X_N(t)))^pd\mu(t)<\infty.$$

A model $X_N(t)$ approximates the process $X(t)$ with reliability $1-\alpha$ and accuracy$\delta$ in the $L_p(T)$ space, if
$$\left\{\begin{array}
{c}c_N\leq \delta/(\beta \ln \frac{2}{\alpha})^{p/\beta} \\ c_N<\delta/p^{p(1-1/\gamma)}
\end{array}\right.,$$
and $\beta$ is a number such that $$\frac{1}{\beta}+\frac{1}{\gamma}=1.$$
\end{thm}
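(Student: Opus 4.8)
The plan is to mirror the proof of Theorem~\ref{thm3}: both displayed inequalities will be deduced from Theorem~\ref{thm2}, and the only new feature is that $\varphi$ is now assembled from two pieces meeting at $t=1$, so the density $f$ with $\varphi(u)=\int_0^u f(v)\,dv$ and the Young--Fenchel transform $\varphi^*$ become piecewise as well, and one must keep track of which branch is in force.

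First I would compute $f$ by differentiating: $f(t)=2t/\gamma$ on $(0,1)$ and $f(t)=t^{\gamma-1}$ on $(1,\infty)$; since $\gamma>2$, the left derivative $2/\gamma<1$ at $t=1$ is strictly below the right derivative $1$, so $\varphi'$ jumps upward there and $\varphi$ stays genuinely convex, the value of $f$ at the single point $t=1$ being immaterial. Next I would evaluate $\varphi^*(x)=\sup_{y>0}(xy-\varphi(y))$ by locating the maximiser $y(x)$: for $x\le 2/\gamma$ one gets $y=\gamma x/2\le 1$ and $\varphi^*(x)=\gamma x^2/4$; for $2/\gamma\le x\le 1$ the maximiser is pinned at the corner $y=1$ and $\varphi^*(x)=x-1/\gamma$; for $x\ge 1$ one gets $y=x^{1/(\gamma-1)}\ge 1$ and, exactly as in Theorem~\ref{thm3}, $\varphi^*(x)=x^{\beta}/\beta$ with $1/\beta+1/\gamma=1$. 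The crucial point is that on $[1,\infty)$ both $f$ and $\varphi^*$ agree verbatim with the pure-power case of Theorem~\ref{thm3}, which is why the two sufficient conditions come out identical to those there.

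For the first inequality I would substitute into \eqref{eq1}. Since $\varphi^*(1)=1-1/\gamma=1/\beta$, as soon as $\ln\frac{2}{\alpha}\ge 1/\beta$ the level $\varphi^{*(-1)}\left(\ln\frac{2}{\alpha}\right)$ lies on the branch $x\ge 1$, hence equals $\left(\beta\ln\frac{2}{\alpha}\right)^{1/\beta}$, and \eqref{eq1} turns into precisely $c_N\le\delta/\left(\beta\ln\frac{2}{\alpha}\right)^{p/\beta}$; for those $\gamma$ for which $1/\beta>\ln 2$ this carries a mild upper bound on $\alpha$, which should be recorded alongside the statement. For the second inequality I would substitute $f$ into \eqref{eq2}, whose inner argument is $w:=p\,(c_N/\delta)^{1/p}$. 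If $w\ge 1$ then $f(w)=w^{\gamma-1}$ and the computation is word for word that of Theorem~\ref{thm3}, giving $c_N^{\gamma}<\delta^{\gamma}/p^{p(\gamma-1)}$, i.e. $c_N<\delta/p^{p(1-1/\gamma)}$; if $w<1$ then $f(w)=2w/\gamma<2/\gamma<1$, so $(f(w))^{p}<1$, while $c_N<\delta/p^{p(1-1/\gamma)}\le\delta$ already holds because $p\ge1$, so \eqref{eq2} is satisfied automatically. Hence the single requirement $c_N<\delta/p^{p(1-1/\gamma)}$ covers both regimes and the proof closes.

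The step I expect to be most delicate is the evaluation of $\varphi^*$ across the corner at $t=1$: one must justify that for $x$ between the two one-sided slopes $2/\gamma$ and $1$ the supremum is attained exactly at $y=1$, and then splice the three branches consistently. Together with the minor bookkeeping of deciding which branch of $\varphi^{*(-1)}$ the level $\ln\frac{2}{\alpha}$ falls into, this is the only thing that goes beyond the algebra already carried out for Theorem~\ref{thm3}.
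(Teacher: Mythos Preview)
Your proposal is correct and follows essentially the same route as the paper: compute $f$ and $\varphi^*$ piecewise, observe that on $[1,\infty)$ they coincide with the pure-power case, and then split the verification of \eqref{eq2} according to whether the argument of $f$ lies above or below $1$. The only differences are cosmetic---the paper obtains $\varphi^*(t)=t^\beta/\beta$ for $t>1$ by integrating $f^{(-1)}$ rather than by direct optimisation---and in fact you are more careful than the paper, which silently assumes $\ln\frac{2}{\alpha}\ge 1/\beta$ so that $\varphi^{*(-1)}$ lands on the power branch; your remark that this imposes a mild restriction on $\alpha$ when $1/\beta>\ln 2$ is a genuine refinement.
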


\begin{proof}
 For the function$ f(t)$ we have
$$f^{(-1)}(t)=\left\{\begin{array}{c}\frac{\gamma}{2}t, t<\frac{2}{\gamma}\\ t^\frac{1}{\gamma-1},t\geq1\end{array}\right..$$
Let us consider $\varphi^*(t)$. If $t>1$, we have
$$\varphi^*(t) = \int_0^{2/\gamma}\frac{\gamma}{2}udu+\int_{2/\gamma}^1du +\int_1^t u^\frac{1}{\gamma-1}du = \left.\frac{\gamma}{2}\frac{u^2}{2}\right|
_0^{\gamma/2}+(1-\frac{2}{\gamma})+\left.u^{\frac{1}{\gamma-1}+1}\right|_1^t =$$
$$= \frac{\gamma}{2}\frac{1}{2}\left(\frac{2}{\gamma}\right)^2+1-\frac{2}{\gamma}+\frac{t^\beta}{\beta}-\frac{1}{\beta} = \frac{t^\beta}{\beta}.$$

This result and Theorem \ref{thm2} induce the first inequality of this Theorem  \ref{thm4}.

Let us consider the second inequality now. Let us first analyze the case when $$\frac{c_N^{1/p}p}{\delta^{1/p}}>1.$$
In such a case, $f(x)=x^{\gamma-1}$, and
$$\delta>c_N\left(\left(\frac{c_N^{1/p}p}{\delta^{1/p}}\right)^{\gamma-1}\right)^p,$$
that is,
$$c_N<\frac{\delta}{p^{p(1-1/\gamma)}},$$
whence
$$\frac{\delta}{p^p}<c_N<\frac{\delta}{p^{p(1-1/\gamma)}}.$$

For the case $$\frac{c_N^{1/p}p}{\delta^{1/p}}>1$$ we have $$f(x)=x\frac{2}{\gamma},$$ and
$$\left\{\begin{array}{c} c_N<\frac{\delta}{p^p}\\ c_N<\frac{\delta}{p^p/2}\left(\frac{\gamma}{2}\right)^{p/2}. \end{array}\right.$$
Since $\gamma>2$ and $p>1$, we obtain
$$c_N<\frac{\delta}{p^p}.$$
Finally, we have
$$c_N<\frac{\delta}{p^{p(1-1/\gamma)}}.$$
\end{proof}

\section{Construction models of stochastic processes from $Sub_\varphi(\Omega)$ that can be represented as a series with independent elements.}

Assume we can represent a stochastic process $X = \{X(t),t\in [0,T]\}$ in the form of series
\begin{equation}\label{eq3}
X(t) = \sum_{k=1}^\infty \xi_k a_k(t),
\end{equation}
where $\xi_k\in Sub_\varphi(\Omega)$, $\xi_k$ are independent, and the next property is true for this series:
$$\sum_{k=1}^\infty \tau_\varphi^2(\xi_k)a^2_k(t)<\infty.$$

Usually, a sum of first $N$ elements of this representation is used as a model of such a process.
However, functions $a_k(t)$ can often be unfindable explicitly. In this case $\hat{a}_k(t)$, approximations of function $a_k(t)$,
may be used as elements of the model of a stochastic process after taking into account the impact of
such approximation an accuracy and reliability of the process approximation with the model.

Definition 6. We will call a model of a stochastic process $X(t)$ the following expression:
$$X_N(t) = \sum_{k=1}^N \xi_k \hat{a}_k(t),$$
where $\hat{a}_k(t)$ are approximations of function $a_k(t)$, $\xi_k\in Sub_\varphi(\Omega)$, $\xi_k$ are independent.

Let us introduce the following notations:
$$\delta_k(t) = |a_k(t)-\hat{a}_k(t)|;$$
$$\Delta_N(t) = |X(t)-X_N(t)| = \left| \sum_{k=1}^N \xi_k \delta_k(t) + \sum_{k=N+1}^\infty \xi_k a_k(t) \right|.$$

We will say that a model $X_N$ approximates a stochastic process $X$ with given accuracy and reliability in the space $L_p[0,T]$, if
$$P\left\{\left(\int_0^T(\Delta_N(t))^pdt\right)^{\frac{1}{p}}>\delta\right\} \leq \alpha.$$
Let us formulate a theorem for simulation of such processes in $L_p(T)$.

\begin{thm} \label{thm5}
 Let s stochastic process $X=\{X(t),t\in [0,T]\}$ belong to $S u b_\varphi (\Omega)$, let $X_N$ be a model of the process $X$. Assume
$$c_N=\int_0^T\left(\tau_\varphi\left(\sum_{k=1}^N \xi_k\delta_k(t) + \sum_{k=N+1}^\infty\xi_k a_k(t)  \right)\right)^{p} dt<\infty.$$
The model $X_N(t)$approximates the stochastic process $X(t)$ with reliability $1-\alpha$ and accuracy $\delta$ in the space $L_p(T)$, when
$$\left\{ \begin{array}{c}c_N\leq \delta/(\varphi^{*(-1)}(ln \frac{2}{\alpha}))^p\\ \delta> c_N(f( \frac {c_N^{1/p}p}{\delta^{1/p}}))^p\end{array}\right.,$$
the function $f$ is provided in Theorem \ref{thm2}, $\varphi^*$ is a Young-Fenchel transform of $\varphi$.
\end{thm}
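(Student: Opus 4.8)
The plan is to reduce Theorem \ref{thm5} to an immediate application of Theorem \ref{thm2}. The key observation is that the model error process $\Delta_N(t) = X(t) - X_N(t) = \sum_{k=1}^N \xi_k\delta_k(t) + \sum_{k=N+1}^\infty \xi_k a_k(t)$ is, at each fixed $t$, again an element of $Sub_\varphi(\Omega)$. This follows because each $\xi_k \in Sub_\varphi(\Omega)$, the class $Sub_\varphi(\Omega)$ is a Banach space under the norm $\tau_\varphi(\cdot)$, the $\xi_k$ are independent, and the assumed convergence $\sum_{k=1}^\infty \tau_\varphi^2(\xi_k)a_k^2(t) < \infty$ together with $\sum_{k=1}^N \tau_\varphi^2(\xi_k)\delta_k^2(t) < \infty$ guarantees that the series defining $\Delta_N(t)$ converges in $Sub_\varphi(\Omega)$. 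Hence $\tau_\varphi(X(t) - X_N(t))$ is well-defined and the quantity $c_N$ in the statement is exactly the quantity $c_N$ appearing in Theorem \ref{thm2}, written out in terms of the series representation.

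Once this identification is made, the proof proceeds as follows. First I would note that the definition of approximation with accuracy $\delta$ and reliability $1-\alpha$ in $L_p[0,T]$ given just before the theorem, namely $P\{(\int_0^T \Delta_N(t)^p\,dt)^{1/p} > \delta\} \le \alpha$, is literally the condition $P\{\int_0^T |X(t) - X_N(t)|^p\,dt > \delta^p\} \le \alpha$ after raising to the power $p$; so modelling in the sense of Definition 6 coincides with modelling in the sense used in Theorem \ref{thm2} (with $\delta$ replaced by $\delta^p$, or equivalently one simply applies Theorem \ref{thm2} verbatim with $\mathbb{T} = [0,T]$ and $\mu$ Lebesgue measure). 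Then I would invoke Theorem \ref{thm2} directly: under the two stated conditions
$$c_N \le \frac{\delta}{\left(\varphi^{*(-1)}\left(\ln\frac{2}{\alpha}\right)\right)^p}, \qquad \delta > c_N\left(f\left(\frac{c_N^{1/p}p}{\delta^{1/p}}\right)\right)^p,$$
Theorem \ref{thm2} yields exactly $P\{\int_0^T |\Delta_N(t)|^p\,dt > \delta\} \le \alpha$, which is the desired conclusion.

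The only genuine content beyond quoting Theorem \ref{thm2} is the verification that $\Delta_N(t) \in Sub_\varphi(\Omega)$ for each $t$ and that $\tau_\varphi(\Delta_N(t))$ is the quantity being integrated in $c_N$. I would handle this by citing the Banach-space property of $Sub_\varphi(\Omega)$ from \cite{Buldygin}: finite sums $\sum_{k=1}^N \xi_k\delta_k(t)$ lie in $Sub_\varphi(\Omega)$ by closure under linear combinations, and the infinite tail $\sum_{k=N+1}^\infty \xi_k a_k(t)$ converges in $\tau_\varphi$-norm because its partial sums form a Cauchy sequence — indeed, for $m < n$, $\tau_\varphi\big(\sum_{k=m+1}^n \xi_k a_k(t)\big)$ is controlled (using independence and the norm properties for sub-$\varphi$ variables) by a quantity tending to $0$ as a consequence of $\sum_{k=1}^\infty \tau_\varphi^2(\xi_k)a_k^2(t) < \infty$. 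Therefore $\Delta_N(t) \in Sub_\varphi(\Omega)$ with a well-defined $\tau_\varphi$-standard, and the finiteness hypothesis $c_N < \infty$ makes the integrability assumption of Theorem \ref{thm2} hold.

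The main obstacle, such as it is, is purely expository: making precise the convergence of the series $\sum_{k=N+1}^\infty \xi_k a_k(t)$ in the space $Sub_\varphi(\Omega)$ and the consequent measurability and integrability needed to even state $c_N$ — everything after that is a one-line appeal to Theorem \ref{thm2}. No new estimate is required; the theorem is, as the surrounding text says of Theorem \ref{thm2}, essentially a restatement of Theorem \ref{thm1} specialized to the error process of a series model.
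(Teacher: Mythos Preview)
Your proposal is correct and follows exactly the same approach as the paper: the paper's entire proof is the single sentence ``The proof of this Theorem \ref{thm5} is similar to the proof of Theorem \ref{thm2}.'' Your added justification that $\Delta_N(t)\in Sub_\varphi(\Omega)$ via the Banach-space structure is a reasonable elaboration, but the paper does not spell this out and simply treats the theorem as an immediate specialization of Theorem \ref{thm2} to the series-model error process.
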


\begin{proof}
The proof of this Theorem \ref{thm5} is similar to the proof of Theorem \ref{thm2}.
\end{proof}

For theorems that will follow we require the following statement.

\begin{thm} \label{thm6} \cite{Buldygin}
Let $\xi_1, \xi_1,\ldots,\xi_n\in Sub_\varphi(\Omega)$ be independent random variables.
If a function $\varphi(|x|^{1/s})$, $x\in R$ is convex for $s\in(0,2]$, then
$$\tau_\varphi^s\left(\sum_{k=1}^n\xi_k\right)\leq \sum_{k=1}^n \tau_\varphi^s(\xi_k).$$
\end{thm}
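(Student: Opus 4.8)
The plan is to establish the subadditivity of the sub-Gaussian standard via the defining exponential moment inequality, exploiting independence to factor the joint exponential moment. First I would recall that each $\xi_k\in Sub_\varphi(\Omega)$ satisfies $E\exp\{\lambda\xi_k\}\le\exp\{\varphi(\lambda\,\tau_\varphi(\xi_k))\}$ for all $\lambda\in R$, where the optimal constant $a$ in Definition 4 is precisely $\tau_\varphi(\xi_k)$ (this is how $\tau_\varphi$ is defined as an infimum, together with the standard closure argument showing the infimum is attained). By independence,
\begin{equation}
E\exp\left\{\lambda\sum_{k=1}^n\xi_k\right\}=\prod_{k=1}^n E\exp\{\lambda\xi_k\}\le\exp\left\{\sum_{k=1}^n\varphi(\lambda\,\tau_\varphi(\xi_k))\right\},
\end{equation}
so it suffices to show that $\sum_{k=1}^n\varphi(\lambda\,\tau_\varphi(\xi_k))\le\varphi(\lambda A)$ for all $\lambda\in R$, where $A:=\left(\sum_{k=1}^n\tau_\varphi^s(\xi_k)\right)^{1/s}$; granting this, the infimum defining $\tau_\varphi\left(\sum\xi_k\right)$ is at most $A$, which is exactly the claimed bound $\tau_\varphi^s\left(\sum\xi_k\right)\le\sum\tau_\varphi^s(\xi_k)$.

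The core of the argument is therefore the purely analytic inequality for $\varphi$: writing $b_k:=\tau_\varphi^s(\xi_k)$ and $p_k:=b_k/\sum_j b_j$, so that $\sum p_k=1$ and $\tau_\varphi(\xi_k)=(p_k)^{1/s}A$, I need
\begin{equation}
\sum_{k=1}^n\varphi\left(\lambda A p_k^{1/s}\right)\le\varphi(\lambda A).
\end{equation}
The key step is to apply convexity of the function $\psi(x):=\varphi(|x|^{1/s})$, which is the hypothesis of the theorem. Set $u:=|\lambda A|^s$ and $u_k:=|\lambda A|^s p_k$, so $\sum_k u_k=u$ and $\varphi(\lambda A p_k^{1/s})=\psi(u_k)$ while $\varphi(\lambda A)=\psi(u)$. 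Since $\psi$ is convex with $\psi(0)=\varphi(0)=0$, it is superadditive on $[0,\infty)$ in the sense that $\psi(u)=\psi\left(\sum u_k\right)\ge\sum\psi(u_k)$; more precisely, convexity plus $\psi(0)=0$ gives $\psi(t x)\le t\,\psi(x)$ for $t\in[0,1]$, hence $\psi(u_k)=\psi\left(\tfrac{u_k}{u}\,u\right)\le\tfrac{u_k}{u}\,\psi(u)$, and summing over $k$ yields $\sum_k\psi(u_k)\le\psi(u)$, which is exactly what is needed. (The case $A=0$ is trivial since then every $\tau_\varphi(\xi_k)=0$.)

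The main obstacle, and the point requiring the most care, is justifying that the constant $\tau_\varphi(\xi_k)$ — defined as an infimum over admissible $a$ — may actually be used inside the exponential inequality, i.e.\ that $E\exp\{\lambda\xi_k\}\le\exp\{\varphi(\lambda\tau_\varphi(\xi_k))\}$ holds with the infimal value itself; this follows from continuity of $\varphi$ and a limiting argument (take $a_j\downarrow\tau_\varphi(\xi_k)$ admissible and pass to the limit, using that $\varphi(\lambda a_j)\to\varphi(\lambda\tau_\varphi(\xi_k))$), and is a standard fact from \cite{Buldygin} that I would simply cite. A secondary technical point is that the inequality must hold for every real $\lambda$, including negative values, which is why the convexity hypothesis is stated for $\varphi(|x|^{1/s})$ with the absolute value — the reduction to $u=|\lambda A|^s\ge0$ above handles this uniformly. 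Everything else is routine.
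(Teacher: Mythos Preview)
Your argument is correct and is essentially the standard proof from \cite{Buldygin}: factor the moment generating function by independence, then use that convexity of $\psi(x)=\varphi(|x|^{1/s})$ together with $\psi(0)=0$ gives $\psi(tu)\le t\,\psi(u)$ for $t\in[0,1]$, whence $\sum_k\varphi(\lambda\tau_\varphi(\xi_k))\le\varphi(\lambda A)$ with $A=\bigl(\sum_k\tau_\varphi^s(\xi_k)\bigr)^{1/s}$. Note, however, that the present paper does not supply its own proof of this statement at all --- Theorem~\ref{thm6} is simply quoted from \cite{Buldygin} as a known result and then invoked in the proofs of Theorems~\ref{thm7} and~\ref{thm8} --- so there is nothing in the paper to compare your proof against beyond observing that you have reproduced the argument the cited reference gives.
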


\begin{thm} \label{thm7}
 Let a stochastic process $X=\{X(t),t\in [0,T]\}$ belong to $S u b_\varphi (\Omega)$,
$$\varphi(t)=\left\{\begin{array}{c}\frac{t^2}{\gamma}, t<1\\ \frac{t^\gamma}{\gamma},t\geq1\end{array}\right.,$$
where  $\gamma>2$.
Assume
$$c_N=\int_0^T\left(\sum_{k=1}^N \tau^2_\varphi(\xi_k)\delta^2_k(t) + \sum_{k=N+1}^\infty\tau^2_\varphi(\xi_k)a^2_k(t)
\right)^{p/2} dt<\infty.$$
The model $X_N(t)$ approximates the stochastic process $X(t)$ with reliability $1-\alpha$ and accuracy $\delta$ in the space $L_p(T)$, if
$$\left\{\begin{array}{c}c_N\leq \delta/(\beta \ln \frac{2}{\alpha})^{p/\beta}\\ c_N<\delta/p^{p(1-1/\gamma)}\end{array}\right.,$$
where $\beta$ is such that $1/\beta+1/\gamma=1$.
\end{thm}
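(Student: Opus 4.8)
The plan is to reduce this statement to Theorem \ref{thm5} (or equivalently Theorem \ref{thm4}) by controlling the sub-$\varphi$ standard of the tail-plus-error process $\Delta_N(t)$ via Theorem \ref{thm6}. First I would check that the hypotheses of Theorem \ref{thm6} are met for the given $\varphi$: one must verify that $\varphi(|x|^{1/s})$ is convex for some $s\in(0,2]$, and the natural choice here is $s=2$, since $\varphi(t)=t^2/\gamma$ near the origin makes $\varphi(\sqrt{|x|})$ behave like $|x|/\gamma$, and $\varphi(t)=t^\gamma/\gamma$ for $t\ge 1$ makes $\varphi(\sqrt{|x|})=|x|^{\gamma/2}/\gamma$, which is convex precisely because $\gamma>2$. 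So with $s=2$, Theorem \ref{thm6} gives, for each fixed $t$ and each finite partial sum, $\tau_\varphi^2\big(\sum_{k=1}^M c_k(t)\xi_k\big)\le \sum_{k=1}^M \tau_\varphi^2(\xi_k)c_k^2(t)$, where the coefficients $c_k(t)$ are $\delta_k(t)$ for $k\le N$ and $a_k(t)$ for $k>N$; here I use the homogeneity $\tau_\varphi(c\xi)=|c|\tau_\varphi(\xi)$.

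Next I would pass to the infinite series: since the right-hand side $\sum_{k=1}^\infty \tau_\varphi^2(\xi_k)c_k^2(t)$ is assumed finite (this is built into the finiteness of $c_N$ together with the standing assumption on the series $X(t)=\sum \xi_k a_k(t)$), the partial sums of $\Delta_N(t)$ form a Cauchy sequence in $Sub_\varphi(\Omega)$, hence converge there, and the $\tau_\varphi$-bound is preserved in the limit, giving
\[
\tau_\varphi^2(\Delta_N(t)) \le \sum_{k=1}^N \tau_\varphi^2(\xi_k)\delta_k^2(t) + \sum_{k=N+1}^\infty \tau_\varphi^2(\xi_k)a_k^2(t).
\]
Raising both sides to the power $p/2$ and integrating over $[0,T]$ shows that the quantity
\[
\int_0^T \big(\tau_\varphi(\Delta_N(t))\big)^p\,dt
\]
is bounded above by the $c_N$ defined in the statement of this theorem. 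Consequently, the $c_N$ appearing in Theorem \ref{thm5} (or Theorem \ref{thm4}) is dominated by the $c_N$ here, so the sufficient conditions of Theorem \ref{thm4} — namely $c_N\le\delta/(\beta\ln\frac{2}{\alpha})^{p/\beta}$ and $c_N<\delta/p^{p(1-1/\gamma)}$, which are exactly the conditions written here, since this $\varphi$ is the one treated in Theorem \ref{thm4} — remain sufficient when stated with the larger $c_N$. This yields the claimed reliability/accuracy conclusion.

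The main obstacle I anticipate is the convexity verification for Theorem \ref{thm6} at the junction point $t=1$ (equivalently $x=1$): one must confirm that the piecewise function $\varphi(\sqrt{|x|})$ is globally convex, i.e. that there is no concave kink where the quadratic piece meets the power piece. A short computation of one-sided derivatives at $x=1$ settles this, and the inequality $\gamma>2$ is exactly what is needed. A secondary technical point is the limiting argument for the infinite sum — justifying that $\tau_\varphi$ is lower semicontinuous (or at least that the bound passes to the limit) under convergence in $Sub_\varphi(\Omega)$; this is standard and follows from the definition of $\tau_\varphi$ as an infimum over $a$ in an exponential-moment inequality, since each such inequality is preserved under the relevant convergence. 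Everything else is a direct substitution into the already-proved Theorems \ref{thm4} and \ref{thm5}.
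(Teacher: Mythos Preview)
Your proposal is correct and follows essentially the same approach as the paper: apply Theorem~\ref{thm6} with $s=2$ to bound $\tau_\varphi(\Delta_N(t))$ by the square root of $\sum_{k=1}^N \tau_\varphi^2(\xi_k)\delta_k^2(t)+\sum_{k>N}\tau_\varphi^2(\xi_k)a_k^2(t)$, then invoke Theorem~\ref{thm4} (via Theorem~\ref{thm5}) for this particular $\varphi$. The paper's own proof is much terser---it simply asserts the inequality ``from Theorem~\ref{thm6} for $s=2$'' without verifying the convexity of $\varphi(\sqrt{|x|})$ at the junction or discussing the passage to the infinite sum---so your additional checks only make the argument more complete.
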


\begin{proof}  From Theorem \ref{thm6} for $s=2$ the next inequalities follow:
$$c_N=\int_0^T
(\tau_\varphi(\Delta_N(t)))^pd\mu(t) = $$
$$=\int_0^T\left( \tau_\varphi\left(\sum_{k=1}^N\xi_k\delta_k(t)+
\sum_{k=N+1}^\infty \xi_ka_k(t)\right) \right)^pdt \leq$$
$$
\leq\int_0^T\left(\sum_{k=1}^N \tau^2_\varphi(\xi_k)\delta^2_k(t) + \sum_{k=N+1}^
\infty\tau^2_\varphi(\xi_k)a^2_k(t)  \right)^{p/2} dt.$$
The last inequality follows from Theorem \ref{thm6} and properties of the function $\tau_\varphi$.
\end{proof}

\begin{thm} \label{thm8}
Let a stochastic process $X=\{X(t),t\in [0,T]\}$ belong to $S u b_\varphi (\Omega)$,
$\varphi(t)= \frac{t^\gamma}{\gamma},$
where  $1<\gamma<2$. Assume
$$c_N=\int_0^T\left(\sum_{k=1}^N \tau^{\gamma}_\varphi(\xi_k)\delta^{\gamma}_k(t) + \sum_{k=N+1}^\infty\tau^{\gamma}_
\varphi(\xi_k)a^{\gamma}_k(t)  \right)^{p/\gamma} dt<\infty.$$
The model $X_N(t)$ approximates the stochastic process $X(t)$ with reliability $1-\alpha$ and accuracy $\delta$ in the space $L_p(T)$, if
$$\left\{\begin{array}{c}c_N\leq \delta/(\beta ln \frac{2}{\alpha})^{p/\beta}\\ c_N<\delta/p^{p(1-1/\gamma)}\end{array}\right.,$$
where $\beta$ is such that $1/\beta+1/\gamma=1$.
\end{thm}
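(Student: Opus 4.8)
The plan is to reduce Theorem~\ref{thm8} to Theorem~\ref{thm5}, mirroring the proof of Theorem~\ref{thm7}, the only difference being that Theorem~\ref{thm6} is now invoked with $s=\gamma$ instead of $s=2$. First I would check that this application is legitimate: since $1<\gamma<2$ we have $\gamma\in(0,2]$, and
\[\varphi\bigl(|x|^{1/\gamma}\bigr)=\frac{\bigl(|x|^{1/\gamma}\bigr)^\gamma}{\gamma}=\frac{|x|}{\gamma},\]
which is linear, hence convex on $R$, so Theorem~\ref{thm6} does apply to the independent summands with $s=\gamma$.

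Next I would run the chain of estimates for $\Delta_N(t)=\sum_{k=1}^N\xi_k\delta_k(t)+\sum_{k=N+1}^\infty\xi_k a_k(t)$, viewed as a sum of independent elements of $Sub_\varphi(\Omega)$. Theorem~\ref{thm6} with $s=\gamma$ gives
\[\tau_\varphi^\gamma(\Delta_N(t))\le\sum_{k=1}^N\tau_\varphi^\gamma(\xi_k\delta_k(t))+\sum_{k=N+1}^\infty\tau_\varphi^\gamma(\xi_k a_k(t)),\]
and the positive homogeneity $\tau_\varphi(c\xi)=|c|\,\tau_\varphi(\xi)$ turns the right-hand side into $\sum_{k=1}^N\tau_\varphi^\gamma(\xi_k)\delta_k^\gamma(t)+\sum_{k=N+1}^\infty\tau_\varphi^\gamma(\xi_k)a_k^\gamma(t)$. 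Raising both sides to the power $p/\gamma>0$ (monotone on $[0,\infty)$) and integrating over $[0,T]$ yields
\[\widetilde{c}_N:=\int_0^T\bigl(\tau_\varphi(\Delta_N(t))\bigr)^p\,dt\le\int_0^T\Bigl(\sum_{k=1}^N\tau_\varphi^\gamma(\xi_k)\delta_k^\gamma(t)+\sum_{k=N+1}^\infty\tau_\varphi^\gamma(\xi_k)a_k^\gamma(t)\Bigr)^{p/\gamma}dt=c_N,\]
so in particular $\widetilde{c}_N<\infty$.

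Finally I would apply Theorem~\ref{thm5} with $\widetilde{c}_N$ in the role of its $c_N$. For $\varphi(t)=t^\gamma/\gamma$ one has $\varphi^*(t)=t^\beta/\beta$, hence $\varphi^{*(-1)}\bigl(\ln\frac{2}{\alpha}\bigr)=(\beta\ln\frac{2}{\alpha})^{1/\beta}$, and $f(t)=t^{\gamma-1}$, so the two conditions of Theorem~\ref{thm5} are equivalent to $\widetilde{c}_N\le\delta/(\beta\ln\frac{2}{\alpha})^{p/\beta}$ and $\widetilde{c}_N<\delta/p^{p(1-1/\gamma)}$ — the same computation carried out in the proof of Theorem~\ref{thm3}. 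Both right-hand sides are independent of $\widetilde{c}_N$, so these conditions are monotone; since $\widetilde{c}_N\le c_N$, imposing the two displayed inequalities of Theorem~\ref{thm8} on $c_N$ forces them for $\widetilde{c}_N$, and Theorem~\ref{thm5} delivers the claimed reliability $1-\alpha$ and accuracy $\delta$. The only step that genuinely needs care is the applicability of Theorem~\ref{thm6} at $s=\gamma$, i.e.\ the convexity of $\varphi(|x|^{1/\gamma})$ displayed above, together with the homogeneity of $\tau_\varphi$; everything else is the routine bookkeeping already done for Theorems~\ref{thm3} and~\ref{thm7}.
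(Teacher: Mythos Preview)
Your proposal is correct and follows essentially the same route as the paper: invoke Theorem~\ref{thm6} with $s=\gamma$ to bound $\tau_\varphi(\Delta_N(t))^p$ by the integrand defining $c_N$, then feed this into the framework of Theorem~\ref{thm5} together with the explicit computations of $\varphi^*$ and $f$ from Theorem~\ref{thm3}. Your write-up is in fact more careful than the paper's, since you explicitly verify the convexity hypothesis $\varphi(|x|^{1/\gamma})=|x|/\gamma$ and cleanly distinguish the ``true'' quantity $\widetilde{c}_N$ from the upper bound $c_N$ before invoking monotonicity.
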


\begin{proof}
Theorem \ref{thm6} for $s=\gamma$ implies the following inequalities:
$$c_N=\int_0^T
(\tau_\varphi(\Delta_N(t)))^pd\mu(t) = $$
$$=\int_0^T\left( \tau_\varphi\left(\sum_{k=1}^N\xi_k\delta_k(t)+
\sum_{k=N+1}^\infty \xi_ka_k(t)\right) \right)^pdt \leq$$
$$\leq\int_0^T\left(\sum_{k=1}^N \tau^\gamma_\varphi(\xi_k)\delta^\gamma_k(t) + \sum_
{k=N+1}^\infty\tau^\gamma_\varphi(\xi_k)a^\gamma_k(t)  \right)^{p/\gamma} dt.$$
The last inequality follows from Theorem \ref{thm6} and properties of the function $\tau_\varphi$.
\end{proof}

\section{ Constructing models of stochastic processes in $L_p(0,T)$ using the Karhunen-Lo{\`e}ve decomposition}

As an example of application of the Theorems formulated above, we  consider a well-known Karhunen-Lo{\`e}ve decomposition of stochastic processes.
Let $X=\{X(t), t\in [0,T]\}$  be continuous in a mean square stochastic process, $EX(t)=0$, $t\in T$,
let $B(t,s)=E X(t)X(s)$ be the correlation function of this stochastic process.
Sine the process $X(t)$ is continuous in the mean square, the function $B(t,s)$ is continuous on $T\times T$.

Consider the homogenous Fredholm integral equation of the second kind
\begin{equation}\label{eq4}
a(t) = \lambda \int_T B(t,s) a(s) ds.
\end{equation}
It is well-known (see \cite{Tricomi}) that this integral equation has at most countable family of eigenvalues.
These numbers are non-negative. Let $\lambda _{k}^{2}$ be the eigenvalues, and let ${{a}_{k}}(t)$ be the corresponding eigenfunctions.
Let us enumerate the eigenvalues in the ascending order: $$0\le {{\lambda }_{1}}\le {{\lambda }_{2}}\le \ldots \le {{\lambda }_{k}}\le {{\lambda }_{k+1}}\le \ldots .$$
It is well-known that ${{a}_{k}}(t)$ are orthogonal functions.

The main issue while constructing a Karhunen-Lo{\`e}ve model is the fact that eigenvalues and eigenfunctions of such integral equations
can be found explicitly only for a few correlation functions.
Let $\hat{a}_k$ be functions that are the approximations of the eigenfunctions of the integral equation,
$\hat{\lambda}_k$ be approximations of the corresponding eigenvalues of this equation.
We can construct such approximations using, for example, a method described in \cite{mokl3}.

Definition 7. Let $X=\{X(t), t\in [0,T]\}\subset Sub_\varphi(\Omega)$ be a stochastic process with the correlation function $B(t,s)=EX(t)\overline{X(s)}$
that can be represented using the Karhunen-Lo{\`e}ve decomposition.
We will call the process $X_N=\{X_N(t),t\in [0,T]\} \subset Sub_\varphi
(\Omega)$ the Karhunen-Lo\'eve model of the mentioned process, if
$$X_N(t)=\sum_{k=1}^N \frac{\hat{a}_k(t)}{\sqrt{\hat{\lambda}}_k}\xi_k,$$
where $\hat{a}_k(t)$ are approximations of eigenfunctions of the homogenous integral Fredholm equation of second kind \eqref{eq4},
and $\hat{\lambda}_k$ are approximations of eigenvalues of the same equation.

Let us formulate theorems that will allow to construct models of stochastic processes using this decomposition in $L_p(T)$.

\begin{thm} \label{thm9}
 Let a stochastic process $X=\{X(t),t\in [0,T]\}$ belong to $S u b_\varphi (\Omega)$, and let
$$\varphi(t)=\left\{\begin{array}{c}\frac{t^2}{\gamma}, t<1\\ \frac{t^\gamma}{\gamma},t\geq1\end{array}\right.$$
for $\gamma>2$, and can be represented using the Karhunen-Lo{\`e}ve decomposition. Assume
$$c_N= \int_0^T\left(\sum_{k=1}^N \tau^2_\varphi(\xi_k)\left
(\frac{\delta^2_k(t) }{\hat{\lambda}_k-\eta_k} +\hat{a}^2_k(t)\frac{(\sqrt{\hat{\lambda}_k}-\sqrt{\hat{\lambda}_k-\eta_k})^2}{\hat{\lambda}_k(\hat
{\lambda}_k-\eta_k)}\right)+\right.$$
$$\left.+\sum_{k=N+1}^\infty\frac{\tau^2_\varphi(\xi_k)a^2_k(t)}{\lambda_k} \right)^{p/2}dt<\infty,$$
where $\delta_k(t)=|\varphi_k(t)-\hat{\varphi}_k(t)|$ is the error of approximation of $k$-th eigenfunction of equation \eqref{eq4},
$\hat{\lambda}_k$ is the approximation of the $k$-the eigenvalue,
$\eta_k=|\lambda_k-\hat{\lambda}_k|$ is the error of approximation of the $k$-the eigenvalue.
The model $X_N(t)$ approximates the stochastic process $X(t)$ with reliability $1-\alpha$ and accuracy $\delta$ in the space $L_p(T)$,
when the following conditions are true:
$$\left\{\begin{array}{c}c_N\leq \delta/(\beta ln \frac{2}{\alpha})^{p/\beta} \\ c_N<\delta/p^{p(1-1/\gamma)}
\end{array}\right..$$
\end{thm}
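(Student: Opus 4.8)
The plan is to reduce Theorem~\ref{thm9} to Theorem~\ref{thm7} by showing that the quantity $c_N$ appearing here dominates the $\tau_\varphi$-based integral of $\Delta_N(t)$, so that the two sufficient conditions of Theorem~\ref{thm7} transfer verbatim. First I would write out the model error explicitly for the Karhunen-Lo\`eve construction,
\[
\Delta_N(t)=\left|\sum_{k=1}^N\xi_k\!\left(\frac{a_k(t)}{\sqrt{\lambda_k}}-\frac{\hat a_k(t)}{\sqrt{\hat\lambda_k}}\right)+\sum_{k=N+1}^\infty \xi_k\frac{a_k(t)}{\sqrt{\lambda_k}}\right|,
\]
and apply Theorem~\ref{thm6} with $s=2$ (legitimate since $\varphi(|x|^{1/2})$ is convex for this $\varphi$) to get
\[
\bigl(\tau_\varphi(\Delta_N(t))\bigr)^2\le \sum_{k=1}^N\tau_\varphi^2(\xi_k)\left(\frac{a_k(t)}{\sqrt{\lambda_k}}-\frac{\hat a_k(t)}{\sqrt{\hat\lambda_k}}\right)^2+\sum_{k=N+1}^\infty \frac{\tau_\varphi^2(\xi_k)a_k^2(t)}{\lambda_k}.
\]

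The core estimate is then a pointwise bound on each term $\bigl(a_k(t)/\sqrt{\lambda_k}-\hat a_k(t)/\sqrt{\hat\lambda_k}\bigr)^2$ in terms of the approximation errors $\delta_k(t)=|a_k(t)-\hat a_k(t)|$ and $\eta_k=|\lambda_k-\hat\lambda_k|$. I would split the difference as
\[
\frac{a_k(t)}{\sqrt{\lambda_k}}-\frac{\hat a_k(t)}{\sqrt{\hat\lambda_k}}
=\frac{a_k(t)-\hat a_k(t)}{\sqrt{\lambda_k}}+\hat a_k(t)\!\left(\frac{1}{\sqrt{\lambda_k}}-\frac{1}{\sqrt{\hat\lambda_k}}\right),
\]
use $(u+v)^2\le$ (a suitable combination) together with $\lambda_k\ge\hat\lambda_k-\eta_k$ to replace the unknown $\lambda_k$ by the computable lower bound $\hat\lambda_k-\eta_k$, and rewrite $\bigl(1/\sqrt{\lambda_k}-1/\sqrt{\hat\lambda_k}\bigr)^2=\bigl(\sqrt{\hat\lambda_k}-\sqrt{\lambda_k}\bigr)^2/(\lambda_k\hat\lambda_k)\le (\sqrt{\hat\lambda_k}-\sqrt{\hat\lambda_k-\eta_k})^2/\bigl(\hat\lambda_k(\hat\lambda_k-\eta_k)\bigr)$. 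Collecting terms and integrating over $[0,T]$ yields exactly
\[
\int_0^T\bigl(\tau_\varphi(\Delta_N(t))\bigr)^p\,dt\le c_N,
\]
with $c_N$ as stated. Once this majorization is in hand, Theorem~\ref{thm7} (applied with this larger $c_N$ in place of its own integral, which only strengthens the hypotheses) immediately gives that the two displayed inequalities on $c_N$ are sufficient for reliability $1-\alpha$ and accuracy $\delta$.

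The main obstacle is the pointwise splitting step: one must be careful that the cross-term handling via an elementary inequality of the form $(u+v)^2\le u^2+v^2+2|u||v|$ actually produces the clean expression $\delta_k^2(t)/(\hat\lambda_k-\eta_k)+\hat a_k^2(t)(\sqrt{\hat\lambda_k}-\sqrt{\hat\lambda_k-\eta_k})^2/\bigl(\hat\lambda_k(\hat\lambda_k-\eta_k)\bigr)$ rather than one with an extra cross term; this forces either a slightly generous constant or an implicit assumption that the stated $c_N$ already absorbs it, and I would check that the monotonicity $\sqrt{\hat\lambda_k}-\sqrt{\lambda_k}\le\sqrt{\hat\lambda_k}-\sqrt{\hat\lambda_k-\eta_k}$ holds with the right sign regardless of whether $\hat\lambda_k$ over- or under-estimates $\lambda_k$ (it does, using $|\lambda_k-\hat\lambda_k|=\eta_k$ and monotonicity of $\sqrt{\cdot}$). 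A secondary routine check is that $\varphi(|x|^{1/2})=|x|/\gamma$ for $|x|^{1/2}<1$ and $|x|^{\gamma/2}/\gamma$ for $|x|^{1/2}\ge1$ is indeed convex on $\mathbb R$, which validates the use of Theorem~\ref{thm6} with $s=2$; everything after that is a transcription of the proof of Theorem~\ref{thm7}.
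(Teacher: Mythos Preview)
Your plan is essentially identical to the paper's own proof: write out $\Delta_N(t)$ for the Karhunen--Lo\`eve model, insert and remove $\hat a_k(t)/\sqrt{\lambda_k}$ to split each coefficient into a $\delta_k(t)/\sqrt{\lambda_k}$ piece and an $\hat a_k(t)\bigl(1/\sqrt{\lambda_k}-1/\sqrt{\hat\lambda_k}\bigr)$ piece, apply Theorem~\ref{thm6} with $s=2$ (through Theorem~\ref{thm7}), and then replace the unknown $\lambda_k$ by the computable lower bound $\hat\lambda_k-\eta_k$ in both places. The cross-term worry you raise is legitimate, and the paper's proof glosses over exactly the same point --- it passes directly from the split sum inside $\tau_\varphi$ to the sum of the two separate squared contributions without explaining how the mixed term disappears --- so your argument is already at least as careful as the original.
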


\begin{proof}
The statement of the theorem follows from Theorem \ref{thm7}.
Indeed,
$$c_N=\int_0^T\left(\tau_\varphi\left(\sum_{k=1}^N \xi_k\left(\frac{a_k(t)}{\sqrt{\lambda_k}}-\frac{\hat{a}_k(t)}{\sqrt{\hat{\lambda}_k}}\right) +
\sum_{k=N+1}^\infty\xi_k\frac{a_k(t)}{\sqrt{\lambda_k}}  \right)\right)^{p} dt =$$
$$= \int_0^T\left(\tau_\varphi\left(\sum_{k=1}^N \xi_k\left(\frac{a_k(t)}{\sqrt{\lambda_k}}-\frac{\hat{a}_k(t)}{\sqrt{\lambda_k}} + \frac{\hat{a}_k
(t)}{\sqrt{\lambda_k}} -\frac{\hat{a}_k(t)}{\sqrt{\hat{\lambda}_k}}\right) + \right.\right.$$
$$\left.\left.+ \sum_{k=N+1}^\infty\xi_k\frac{a_k(t)}{\sqrt{\lambda_k}}  \right)\right)^{p} dt= \int_0^T\left(\tau_\varphi\left(\sum_{k=1}^N \xi_k\frac{1}{\sqrt{\lambda_k}}\delta_k(t) +\right.\right.$$
$$\left.\left.+\sum_{k=1}^N\xi_k\hat{a}_k(t)\left(\frac{1}{\sqrt{\lambda_k}}-\frac{1}{\sqrt{\hat{\lambda}_k}}\right)  + \sum_{k=N+1}^\infty\tau_\varphi
(\xi_k)\frac{a_k(t)}{\sqrt{\lambda_k}}  \right)\right)^{p} dt\leq
$$
$$\leq \int_0^T\left(\sum_{k=1}^N \tau^2_\varphi(\xi_k)\left(\frac{\delta^2_k(t) }{\hat{\lambda}_k-\eta_k} +\hat{a}^2_k(t)\frac{(\sqrt{\hat{\lambda}
_k}-\sqrt{\hat{\lambda}_k-\eta_k})^2}{\hat{\lambda}_k(\hat{\lambda}_k-\eta_k)}\right)+\right.$$
$$\left.+\sum_{k=N+1}^\infty\frac{\tau^2_\varphi(\xi_k)a^2_k(t)}{\lambda_k} \right)^{p/2}dt.$$
In the case when $\tau_\varphi(\xi_k)\leq\tau$ $\forall k$ we will have the next statement.
\end{proof}

\begin{thm} \label{thm10}
 Let a stochastic process $X=\{X(t),t\in [0,T]\}$ belong to $S u b_\varphi (\Omega)$,
$$\varphi(t)=\left\{\begin{array}{c}\frac{t^2}{\gamma}, t<1\\ \frac{t^\gamma}{\gamma},t\geq1\end{array}\right.$$
for  $\gamma>2$, and let $\forall k:$ $\tau_\varphi(\xi_k)=\tau$.
Assume the process $X$ can be represented using the Karhunen-Lo\`eve decomposition, and let
$$c_N=\tau^{p/2}\int_0^T\left(\left(B(t,t)-\sum_{k=1}^N \frac{(\hat{a}_k(t)-\delta_k(t))^2}{\hat{\lambda}_k+\eta_k}\right)\right.+ $$
$$+\left.\sum_{k=1}^N \left
(\frac{\delta^2_k(t) }{\hat{\lambda}_k-\eta_k} +
\hat{a}^2_k(t)\frac{(\sqrt{\hat{\lambda}_k}-\sqrt{\hat{\lambda}_k-\eta_k})^2}{\hat{\lambda}_k(\hat{\lambda}_k-\eta_k)}\right)\right)^
{p/2} dt<\infty,$$
where $\delta_k(t)=|\varphi_k(t)-\hat{\varphi}_k(t)|$ is the error of approximation of $k$-th eigenfunction of equation \eqref{eq4},
$\hat{\lambda}_k$ is the approximation of the $k$-the eigenvalue, $\eta_k=|\lambda_k-\hat{\lambda}_k|$ is the error of approximation of the $k$-the eigenvalue.
The model $X_N(t)$ approximates the stochastic process $X(t)$ with reliability $1-\alpha$ and accuracy $\delta$ in the space $L_p(T)$, when the following conditions are true:
$$\left\{\begin{array}{c}c_N\leq \delta/(\beta ln \frac{2}{\alpha})^{p/\beta} \\ c_N<\delta/p^{p(1-1/\gamma)}\end{array}\right..$$
\end{thm}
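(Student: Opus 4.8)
The plan is to derive Theorem~\ref{thm10} from Theorem~\ref{thm7} (via Theorem~\ref{thm9}) by specialising to the case $\tau_\varphi(\xi_k)=\tau$ for every $k$ and then rewriting the infinite tail $\sum_{k=N+1}^{\infty}a_k^2(t)/\lambda_k$ in terms of the correlation function $B(t,t)$ and the first $N$ approximate eigenpairs. First I would substitute $\tau_\varphi(\xi_k)=\tau$ into the quantity $c_N$ of Theorem~\ref{thm9}; since $\tau$ does not depend on $k$, it comes out of both inner sums and out of the exponent $p/2$, producing the constant $\tau$-prefactor standing in front of the integral in the statement of Theorem~\ref{thm10}.

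The key step uses the Karhunen--Lo\`eve expansion in Mercer form. Because $X$ is mean-square continuous, $B(t,s)$ is continuous and non-negative definite, so $B(t,s)=\sum_{k=1}^{\infty}a_k(t)a_k(s)/\lambda_k$ with absolute and uniform convergence; in particular $B(t,t)=\sum_{k=1}^{\infty}a_k^2(t)/\lambda_k$, hence
$$\sum_{k=N+1}^{\infty}\frac{a_k^2(t)}{\lambda_k}=B(t,t)-\sum_{k=1}^{N}\frac{a_k^2(t)}{\lambda_k}.$$
Then I would bound the finite sum on the right from below, term by term: from $\eta_k=|\lambda_k-\hat\lambda_k|$ one gets $\lambda_k\le\hat\lambda_k+\eta_k$, so $1/\lambda_k\ge1/(\hat\lambda_k+\eta_k)$; and from $\delta_k(t)=|a_k(t)-\hat a_k(t)|$ one gets $|a_k(t)|\ge|\hat a_k(t)|-\delta_k(t)$, so, on the set where $|\hat a_k(t)|\ge\delta_k(t)$, $a_k^2(t)\ge(\hat a_k(t)-\delta_k(t))^2$ (taking $\hat a_k(t)\ge0$; see the closing remark). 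Combining the two estimates gives
$$\sum_{k=N+1}^{\infty}\frac{a_k^2(t)}{\lambda_k}\le B(t,t)-\sum_{k=1}^{N}\frac{(\hat a_k(t)-\delta_k(t))^2}{\hat\lambda_k+\eta_k},$$
which is precisely the tail replacement built into the $c_N$ of Theorem~\ref{thm10}.

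Substituting this upper bound for the tail into the integrand of the $c_N$ of Theorem~\ref{thm9} (with $\tau$ already extracted), and using that $u\mapsto u^{p/2}$ and the integral are monotone, one sees that the $c_N$ of Theorem~\ref{thm9} does not exceed the $c_N$ of Theorem~\ref{thm10}. The sufficient condition of Theorem~\ref{thm9} for this $\varphi$ (the same $\varphi$ as in Theorems~\ref{thm4} and~\ref{thm7}) is the pair of inequalities requiring $c_N$ to be small enough, so it is inherited when the smaller $c_N$ of Theorem~\ref{thm9} is replaced by the larger $c_N$ of Theorem~\ref{thm10}; applying Theorem~\ref{thm9} then yields the asserted reliability $1-\alpha$ and accuracy $\delta$.

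The step I expect to be the main obstacle is the pointwise lower bound on $a_k^2(t)$. The honest inequality is only $|a_k(t)|\ge|\hat a_k(t)|-\delta_k(t)$, which may be squared solely on the set where $|\hat a_k(t)|\ge\delta_k(t)$ --- i.e. where the eigenfunction approximation is accurate --- and even there it produces $(|\hat a_k(t)|-\delta_k(t))^2$ rather than $(\hat a_k(t)-\delta_k(t))^2$; making this fully rigorous requires a standing assumption on the quality of $\hat a_k$ on $[0,T]$ (or restriction to the relevant subset) together with care over the sign of $\hat a_k$. Everything else --- Mercer's theorem, the eigenvalue perturbation estimate, and the reduction to Theorem~\ref{thm9} --- is routine.
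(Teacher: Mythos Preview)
Your approach is essentially the paper's own proof: specialise Theorem~\ref{thm9} to $\tau_\varphi(\xi_k)=\tau$, invoke Mercer's theorem to write the tail $\sum_{k>N}a_k^2(t)/\lambda_k=B(t,t)-\sum_{k\le N}a_k^2(t)/\lambda_k$, and then bound each $a_k^2(t)/\lambda_k$ from below by $(\hat a_k(t)-\delta_k(t))^2/(\hat\lambda_k+\eta_k)$ to obtain the displayed $c_N$. The paper carries out exactly these steps and nothing more; in particular it applies the last inequality without the qualification you flag, so your identification of the sign/region issue in the bound $a_k^2(t)\ge(\hat a_k(t)-\delta_k(t))^2$ is a point on which you are actually more careful than the original.
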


\begin{proof}
This statement follows from the Mercer's Theorem \cite{Tricomi} and the previous Theorem.
Indeed, according to the Mercer's Theorem,
$$B(t,t)=\sum_{k=1}^\infty \frac{a^2_k(t)}{\lambda_k},$$
therefore
$$c_N= \int_0^T\left(\sum_{k=1}^N \tau^2\left(\frac{\delta^2_k(t) }{\hat{\lambda}_k-\eta_k} +\hat{a}^2_k(t)\frac{(\sqrt{\hat{\lambda}_k}-\sqrt{\hat
{\lambda}_k-\eta_k})^2}{\hat{\lambda}_k(\hat{\lambda}_k-\eta_k)}\right)+\sum_{k=N+1}^\infty\frac{\tau^2a^2_k(t)}{\lambda_k} \right)^{p/2}dt=
$$
$$=\tau^{p/2}\int_0^T\left(\sum_{k=1}^N \tau^2\left(\frac{\delta^2_k(t) }
{\hat{\lambda}_k-\eta_k} +
\hat{a}^2_k(t)\frac{\left(\sqrt{\hat{\lambda}_k}-\sqrt{\hat{\lambda}_k-\eta_k}\right)^2}{\hat{\lambda}_k(\hat{\lambda}_k-\eta_k)}\right)
+ B(t,t) - \sum_{k=1}^N\frac{a^2_k(t)}{\lambda_k} \right)^{p/2}dt\leq$$
$$\leq\tau^{p/2}\int_0^T\left(\left(B(t,t)-\sum_{k=1}^N \frac{(\hat{a}_k(t)-\delta_k(t))^2}{\hat{\lambda}_k+\eta_k}\right)\right.+
$$
$$\left.+\sum_{k=1}^N \left(\frac
{\delta^2_k(t) }{\hat{\lambda}_k-\eta_k} +
\hat{a}^2_k(t)\frac{(\sqrt{\hat{\lambda}_k}-\sqrt{\hat{\lambda}_k-\eta_k})^2}{\hat{\lambda}_k(\hat{\lambda}_k-\eta_k)}\right)\right)^
{p/2} dt.$$
\end{proof}

\begin{thm} \label{thm11}
Let a stochastic process $X=\{X(t),t\in [0,T]\}$ belong to $S u b_\varphi (\Omega)$, let
$$\varphi(t)= \frac{t^\gamma}{\gamma}$$
for $1<\gamma<2$.
Assume
$$c_N= \int_0^T\left(\sum_{k=1}^N \tau^{\gamma}_\varphi(\xi_k)\left(\frac{\delta^{\gamma}_k(t) }{(\lambda_k-
\eta_k)^\gamma} +\hat{a}^{\gamma}_k(t)\frac{(\sqrt{\hat{\lambda}_k}-\sqrt{\hat{\lambda}_k-\eta_k})^{\gamma}}{(\hat{\lambda}_k(\hat{\lambda}_k-\eta_k))^
\gamma}\right)+\right.$$
$$+\left.\sum_{k=N+1}^\infty\frac{\tau^{\gamma}_\varphi(\xi_k)a^{2\gamma}_k(t)}{\lambda_k^\gamma} \right)^{p/\gamma}dt,$$
where $\delta_k(t)=|\varphi_k(t)-\hat{\varphi}_k(t)|$ is the error of approximation of $k$-th eigenfunction of equation (5), $\hat{\lambda}_k$ is the approximation of the $k$-the eigenvalue, $\eta_k=|\lambda_k-\hat{\lambda}_k|$ is the error of approximation of the $k$-the eigenvalue.
The model $X_N(t)$ approximates the stochastic process $X(t)$ with reliability $1-\alpha$ and accuracy $\delta$ in the space $L_p(T)$, when the following conditions are true:
$$\left\{ \begin{matrix}
   {{c}_{N}}\le \delta /{{(\beta ln\frac{2}{\alpha })}^{p/\beta }}  \\
   {{c}_{N}}<\delta /{{p}^{p(1-1/\gamma )}}  \\
\end{matrix} \right..$$
\end{thm}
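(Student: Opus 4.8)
The plan is to reproduce, \emph{mutatis mutandis}, the argument already used for Theorem \ref{thm9}, with Theorem \ref{thm8} (and hence Theorem \ref{thm6} applied with $s=\gamma$) playing the role that Theorem \ref{thm7} (the case $s=2$) played there. First I would verify that Theorem \ref{thm6} is indeed applicable with $s=\gamma$: since here $\varphi(t)=t^\gamma/\gamma$, the function $\varphi(|x|^{1/\gamma})=|x|/\gamma$ is linear, hence convex, and $1<\gamma<2\le 2$, so the hypothesis of Theorem \ref{thm6} holds and for independent $\xi_k$ one gets $\tau_\varphi^\gamma\!\big(\sum_k\xi_k\big)\le\sum_k\tau_\varphi^\gamma(\xi_k)$. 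Together with the positive homogeneity $\tau_\varphi(c\xi)=|c|\,\tau_\varphi(\xi)$, this converts an $L_p$ bound on $\tau_\varphi(\Delta_N(t))$ into an integral of a sum (over $k$) of individual $\gamma$-th powers, raised to the power $p/\gamma$, which is exactly the shape of the $c_N$ appearing in Theorem \ref{thm8}.

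Next I would carry out the same telescoping decomposition of the model error as in the proof of Theorem \ref{thm9}. Writing
$$\Delta_N(t)=\sum_{k=1}^N\xi_k\!\left(\frac{a_k(t)}{\sqrt{\lambda_k}}-\frac{\hat a_k(t)}{\sqrt{\hat\lambda_k}}\right)+\sum_{k=N+1}^\infty\xi_k\frac{a_k(t)}{\sqrt{\lambda_k}},$$
I would add and subtract $\hat a_k(t)/\sqrt{\lambda_k}$ inside the first sum, so that each summand with $k\le N$ splits into an eigenfunction-error term $\xi_k\big(a_k(t)-\hat a_k(t)\big)/\sqrt{\lambda_k}$ and an eigenvalue-error term $\xi_k\hat a_k(t)\big(1/\sqrt{\lambda_k}-1/\sqrt{\hat\lambda_k}\big)$. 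Applying Theorem \ref{thm6} with $s=\gamma$ and the homogeneity of $\tau_\varphi$ then gives
$$c_N=\int_0^T(\tau_\varphi(\Delta_N(t)))^p\,dt\le\int_0^T\!\left(\sum_{k=1}^N\tau_\varphi^\gamma(\xi_k)\!\left(\frac{\delta_k^\gamma(t)}{\lambda_k^{\gamma/2}}+\hat a_k^\gamma(t)\left|\tfrac1{\sqrt{\lambda_k}}-\tfrac1{\sqrt{\hat\lambda_k}}\right|^\gamma\right)+\sum_{k=N+1}^\infty\frac{\tau_\varphi^\gamma(\xi_k)a_k^\gamma(t)}{\lambda_k^{\gamma/2}}\right)^{p/\gamma}dt,$$
using $|a_k(t)-\hat a_k(t)|=\delta_k(t)$.

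Finally I would replace the unknown exact eigenvalues $\lambda_k$ by the computable approximations, exactly as in the proof of Theorem \ref{thm9}. Since $\eta_k=|\lambda_k-\hat\lambda_k|$ we have $\lambda_k\ge\hat\lambda_k-\eta_k$, so $1/\lambda_k^{\gamma/2}\le 1/(\hat\lambda_k-\eta_k)^{\gamma/2}$ for the eigenfunction-error and tail terms; and because $x\mapsto x^{-1/2}$ is decreasing and convex, on the interval $[\hat\lambda_k-\eta_k,\hat\lambda_k+\eta_k]$ the quantity $|1/\sqrt{\lambda_k}-1/\sqrt{\hat\lambda_k}|$ is largest at the left endpoint, whence
$$\left|\frac1{\sqrt{\lambda_k}}-\frac1{\sqrt{\hat\lambda_k}}\right|\le\frac{\sqrt{\hat\lambda_k}-\sqrt{\hat\lambda_k-\eta_k}}{\sqrt{\hat\lambda_k(\hat\lambda_k-\eta_k)}}.$$
Substituting these estimates reduces the right-hand side of the previous display to the majorant written as $c_N$ in the statement, and then Theorem \ref{thm8} (whose hypothesis $c_N<\infty$ and whose two sufficient inequalities are precisely those claimed here) finishes the argument. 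The only genuinely delicate point is this eigenvalue-perturbation step: the monotonicity/convexity comparison on $[\hat\lambda_k-\eta_k,\hat\lambda_k+\eta_k]$, together with the implicit requirement $\hat\lambda_k>\eta_k$ (needed both for the bound and to keep $\lambda_k>0$), and the careful tracking of the exponents $\gamma$ versus $\gamma/2$ throughout; everything else is the same bookkeeping as in Theorems \ref{thm7}, \ref{thm8} and \ref{thm9}.
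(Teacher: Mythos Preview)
Your proposal is correct and follows essentially the same route as the paper: the paper's own proof is the single sentence ``the statement follows from Theorem~\ref{thm8} and the proof of Theorem~\ref{thm10}'' (which in turn rests on the computation in Theorem~\ref{thm9}), and you have simply unpacked that sentence in full, carrying out the telescoping split of $\frac{a_k}{\sqrt{\lambda_k}}-\frac{\hat a_k}{\sqrt{\hat\lambda_k}}$, applying Theorem~\ref{thm6} with $s=\gamma$, and then invoking Theorem~\ref{thm8}. Your remark about tracking $\gamma$ versus $\gamma/2$ is well placed: the natural outcome of the computation gives denominators $(\hat\lambda_k-\eta_k)^{\gamma/2}$, $(\hat\lambda_k(\hat\lambda_k-\eta_k))^{\gamma/2}$ and a tail term $a_k^{\gamma}/\lambda_k^{\gamma/2}$, so the exponents printed in the displayed $c_N$ of the statement appear to be misprints rather than something your argument fails to reach.
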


\begin{proof}
he statement of this theorem follows from the Theorem \ref{thm8} and the proof of the Theorem \ref{thm10}.
\end{proof}

\section{{Conclusions}}

This article discusses modeling methods with specified reliability and accuracy of stochastic processes from $Sub_\varphi(\Omega)$
spaces in $L_p(T)$ spaces that allow decomposition in series with independent elements.
Theorems are proved that allow constructing models of such stochastic processes with given reliability and accuracy.
Besides, theorems have been proven for stochastic processes with expansion terms that cannot be explicitly determined but can be approximated by certain functions.
Similar theorems for some $Sub_\varphi(\Omega)$ spaces with given functions $\varphi$. As an example of the application of the results of the article,
theorems have been proven that allow constructing the Karhunen-Lo{\`e}ve model in cases where the integral equation corresponding to the model cannot be explicitly solved.

\noindent\hrulefill


\begin{thebibliography}{99}

 {

 \bibitem{Buldygin}
V.V. Buldygin, Yu.V. Kozachenko. Metric Characterization of Random Variables and Random Processes.  -- Providence: AMS, 260 p.--2000.

\bibitem{kozach}
Yu.V. Kozachenko, A.O. Pashko. Modelyuvannya vypadkovykh protsesiv, {Ky\"{\i}v: Vydavnychyj Tsentr ``Ky\"{\i}vs'kyj Universytet''}, 223 p.-- 1999.

\bibitem{koz-kam}
Yu. V. Kozachenko, O. E. Kamenshchikova.
Approximation of $\text{SSub}_{\varphi}(\Omega)$  stochastic processes in the space $L_p(T)$.
Theory Probab. Math. Stat. 79, 83--88, 2009.

\bibitem{Tricomi}
F. G. Tricomi. Integral equations. --  New York: Interscience Publishers, 238p.--1957.

\bibitem{mokl3}
O.M. Mokliachuk.  Simulation of random processes with known correlation function with the help of Karhunen-Lo{\`e}ve decomposition,
Theory of Stochastic Processes, Vol. 13, No.4, 163-169, 2007
}

\end{thebibliography}
\end{document}